\let\oldsection\section
\renewcommand\section{\setcounter{equation}{0}\oldsection}
\newtheorem{theorem}{Theorem}[section]
\newtheorem{proposition}{Proposition}[section]
\newtheorem{remark}{Remark}[section]
\begin{document}

\title[Blow up of compressible Navier-Stokes equations]{Finite time blow up of compressible Navier-Stokes equations on half space or outside a fixed ball}

\author{Dongfen~Bian}
\address[Dongfen~Bian]{ School of Mathematics and Statistics, Beijing Institute of Technology, Beijing 100081, China; Division of Applied Mathematics, Brown University, Providence, Rhode Island 02912, USA}
\email{dongfen\_bian@brown.edu; biandongfen@bit.edu.cn}


\author{Jinkai~Li}
\address[Jinkai~Li]{South China Research Center for Applied Mathematics and
                Interdisciplinary Studies, South China Normal University,
                Zhong Shan Avenue West 55, Tianhe District, Guangzhou
                510631, China}
\email{jklimath@m.scnu.edu.cn; jklimath@gmail.com}



\keywords{Compressible Navier-Stokes equations; finite time blow up; classical solutions.}
\subjclass[2010]{35Q30, 35A09, 35B44, 76N99.}


\begin{abstract}
In this paper, we consider the initial-boundary value problem to the
compressible Navier-Stokes equations for ideal gases without heat
conduction in the half
space or outside a fixed ball in $\mathbb R^N$, with $N\geq1$.
We prove that any classical solutions $(\rho, u, \theta)$, in the class $C^1([0,T]; H^m(\Omega))$, $m>[\frac N2]+2$,
with bounded from below initial entropy and compactly supported initial density,
which allows to touch the physical boundary, must blow-up in finite time, as long as the initial mass is positive. This paper extends the classical reault by Xin [CPAM, 1998], in which the Cauchy probelm is considered, to the case that with physical boundary.
\end{abstract}

\maketitle

\allowdisplaybreaks

\section{Introduction}
\label{sec1}
The compressible Navier-Stokes equations
for idea gases on a domain $\Omega\subseteq\mathbb R^N$, $N\geq1$, read as
\begin{eqnarray}
  &&\partial_t\rho+\text{div}\,(\rho u)=0,\label{cns1}\\
  &&\partial_t(\rho u)+\text{div}\,(\rho u\otimes u)-\text{div}\,S+\nabla p=0,\label{cns2}\\
  &&\partial_t(\rho E)+\text{div}\,(u(\rho E+p))=\text{div}\,q+\text{div}\,(S\cdot u),\label{cns3}
\end{eqnarray}
where the unknowns are the density $\rho\geq0$, the velocity $u\in\mathbb R^N$, and the specific total energy $E\geq0$, with $E=\frac{|u|^2}{2}+e$, and $e$ the specific internal energy. The stress tensor $S$ is given by
$$
S=\mu(\nabla u+(\nabla u)^T)+\lambda\text{div}\,uI,
$$
with two constant Lam\'e viscosity coefficients $\mu$ and $\lambda$ satisfying
$$
\mu\geq0,\quad 2\mu+N\lambda\geq0.
$$
The heat flux $q$ is given by
$q=\kappa\nabla\theta,$
for some nonnegative constant coefficient $k$.
Recalling that we consider the ideal gases, the state equations are
\begin{equation}\label{STAEQ}
e=c_v\theta,\quad p=R\rho\theta,  \quad p=Ae^{s/c_v}\rho^\gamma,
\end{equation}
where $s$ is the entropy, $c_v$, $R$, $A$ and $\gamma>1$ are positive
constants, with $c_v=\frac{R}{\gamma-1}$.

In the absence of vacuum, i.e.\,the density is away from zero, local well-posedness of classical solutions in the H\"older spaces to the compressible Navier-Stokes equations was established
by Itaya \cite{ITAYA} and Tani \cite{TANI}, while the global well-posedness of classical solutions in the Sobolev spaces was firstly established by Matsummura and Nishida \cite{MATSU1,MATSU2}, under the condition that $\|(\rho_0-\bar\rho,u_0,\theta_0-\bar\theta)\|_{H^m}$ is suitably small, where $m>[\frac N2]+2$, and
$\bar\rho$ and $\bar\theta$ are two positive constants.
In the presence of vacuum, it was first proved by Lions \cite{Lions2} the global existence of weak solutions to the isentropic compressible
Navier-Stokes equations (i.e.\,system (\ref{cns1})--(\ref{cns2}) by setting $p=A\rho^\gamma$), with $\gamma\geq\frac{3N}{N+2}$, $N=2,3$. His result
was later extended by Feireisl, Novotny and Pezeltova \cite{Feireisl1} to
the case $\gamma>\frac{3}{2}$, and by Jiang and Zhang \cite{Jiang4} to the
case $\gamma>1$ for axisymmetric solutions. Concerning the full compressible
Navier-Stokes equations, only the global existence of the
so called variational
solutions was proven by Feireisl \cite{Feireisl4,Feireisl5}, where the energy
equation was satisfied only in the sense of
inequality. Local well-posedness of strong solutions to the full compressible Navier-Stokes equations was established by Cho and Kim \cite{CK}; however,
it should be noted that the strong solutions established in \cite{CK} have
no information on the entropy, and in particular it is not known if the
corresponding entropy is bounded or not.

A natural question is whether the classical solutions to the compressible Navier-Stokes equations exist globally or not, when the
initial vacuum is allowed. It was first proved by Xin \cite{XinBlowup}
that smooth solutions, with nontrivial and compactly supported initial density, to any dimensional full compressible Navier-Stokes equations
without heat conduction
or one dimensional isentropic compressible Navier-Stokes equations, must
blow up in finite time. Xin's blow up result
was later generalized by Cho and Jin \cite{ChoJin}, and Tan and Wang \cite{TanWang} to the case with heat conduction, and by Rozanova \cite{Rozanova} to the case of rapidly decreasing solutions.
Moreover, it was shown in a recent paper by Xin
and Yan \cite{XINYAN} that the blow up result may still hold
without the assumptions of compactly supported initial density
or rapidly decreasing of the solutions; they proved that
the blow up for classical solutions occurs in finite time, as long as
the initial density is not identically equal to zero, on a bounded open
set surrounded by vacuum region. Finally, if we focus on the
radially symmetric solutions,
then the finite time blow up result also holds for the two dimensional
isentropic or isothermal compressible Navier-Stokes equations, see Luo \cite{LUO},
and Du, Li and Zhang \cite{DuLiZhang}. However, there is a somewhat
surprising result by Huang,
Li and Xin \cite{HLX1}, where they proved the global well-posedness of
classical solutions to the three dimensional isentropic compressible
Navier-Stokes equations, with initial data of small energy but
allowed to have vacuum and even compactly supported initial density.

Note that in all the papers \cite{XinBlowup,TanWang,ChoJin,Rozanova,LUO,DuLiZhang},
concerning the finite time blow up of classical
solutions to the compressible Navier-Stokes equations, the Cauchy problem
was considered, in other words, the domain under consideration has no
physical boundary. In \cite{XINYAN}, the initial-boundary value
problem was also considered, and thus the physical boundary was allowed;
however, since the additional assumption imposed on the initial data in \cite{XINYAN} prevent the isolated mass group
from touching the boundary, it was essentially reduced to the
case without any physical boundary. In view of the finite time blow up
results in the above mentioned papers, the
remaining question is if the classical
solutions to the compressible Naiver-Stokes equations still blow up in finite
time in the presence of physical boundary. We will partially answer this question. Precisely, we will prove
that if the domain $\Omega$ under consideration
is either the half space $\mathbb R^N_+$ or the exterior
domain $\mathbb R^N\setminus B_{r_0}$, then classical solutions to the
compressible Navier-Stokes equations must blow up in finite time, as long
as the initial mass is positive and the initial density is compactly
supported in $\overline\Omega$.

In this paper, we consider the compressible Navier-Stokes equations without heat conduction, in other words, we consider the following system
\begin{eqnarray}
  &&\partial_t\rho+\text{div}\,(\rho u)=0,\label{CNS1}\\
  &&\partial_t(\rho u)+\text{div}\,(\rho u\otimes u)-\text{div}\,S+\nabla p=0,\label{CNS2}\\
  &&\partial_t(\rho E)+\text{div}\,(u(\rho E+p))=\text{div}\,(S\cdot u).\label{CNS3}
\end{eqnarray}
We always suppose that the viscosity coefficients $\mu$ and $\lambda$ satisfy
\begin{equation}\label{COEFF}
  \mu>0,\quad 2\mu+N\lambda>0.
\end{equation}

We consider the initial-boundary value problems to system (\ref{CNS1})--(\ref{CNS3}), on the half space or outside a fixed ball (without loss of generality, we can suppose that the fixed ball is centered at the origin). Hence, the domain $\Omega$ under consideration is taken as one of the following two cases:

(i) $\Omega=\mathbb R^N_+=\{(x_1,\cdots,x_N)|x_i\in\mathbb R, 1\leq i\leq N-1, x_N>0\}$,

(ii) $\Omega=\mathbb R^N\setminus \overline B_{r_0} =\{x\in\mathbb R^N||x|>r_0\}$, for some positive number $r_0$.
\\
We complement system (\ref{CNS1})--(\ref{CNS3}) with the following
boundary condition
\begin{equation}
  \label{BC}
  u|_{\partial\Omega}=0,\quad (u(x,t),\theta(x,t))\rightarrow0,\mbox{ as }x\rightarrow\infty,
\end{equation}
while the initial condition
reads as
\begin{equation}\label{IC}
  (\rho, u, \theta)|_{t=0}=(\rho_0, u_0,\theta_0)\in H^m(\Omega),
\end{equation}
for some integer $m>[\frac N2]+2$.

Recalling the state equation $p=Ae^{s/c_v}\rho^\gamma$ in (\ref{STAEQ}), it is
natural for us to assume the following compatibility condition on the initial data
\begin{equation}
  \label{COMP}
  \theta_0(x)>0,\mbox{ for any }x\in\mathcal O_+:=\{x\in\Omega|\rho_0(x)>0\}.
\end{equation}
As a result, in the non-vacuum region, by the state equations in (\ref{STAEQ}), the initial data of the entropy $s$ on $\mathcal O_+$ is well-defined as
\begin{equation}\label{ICS}
  s_0(x)=c_v\log\left(\frac{R}{A}\theta_0(x)\rho_0^{1-\gamma}(x)\right), \mbox{ for }x\in\mathcal O_+.
\end{equation}


%
%
%

We have the following theorem on the blow up of classical solutions to system (\ref{CNS1})--(\ref{CNS3}), subject to (\ref{BC})--(\ref{IC}).

\begin{theorem}\label{thm1}
Suppose that $(\rho_0, u_0, \theta_0)\in H^m(\Omega)$, for some $m>[\frac N2]+2$, satisfying the compatibility condition (\ref{COMP}), and
$$
supp~\rho_0\subseteq B_{R_0}, \quad \int_\Omega\rho_0(x)dx>0,
$$
for some positive number $R_0$ (with $R_0>r_0$, for the case $\Omega=\mathbb R^N\setminus \overline B_{r_0}$).
Let $s_0$ be the function defined by (\ref{ICS}) on $\mathcal O_+$, satisfying \begin{equation}\label{ASSU}
\inf_{x\in\mathcal O_+}s_0(x)\geq\underline s_0,
\end{equation}
for some constant $\underline s_0$.


{Then, the compressible Navier-Stokes equations (\ref{CNS1})--(\ref{CNS3}), subject to (\ref{BC})--(\ref{IC}), either do not have
any local classical solution $(\rho, u, \theta)$ in the class $C^1([0,T]; H^m(\Omega))$, or otherwise this local solution must blow up in finite time.}
\end{theorem}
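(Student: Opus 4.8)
The plan is to argue by contradiction, following the strategy of Xin \cite{XinBlowup}, but paying careful attention to the physical boundary. Suppose a classical solution $(\rho,u,\theta)\in C^1([0,T];H^m(\Omega))$ exists on $[0,T]$ for every $T>0$. The first step is to track the support of the density. Since $\rho\in C^1([0,T];H^m(\Omega))$ with $m>[\frac N2]+2$, the velocity $u$ is $C^1$ in space and time, so the particle trajectory flow $X(t,x)$ solving $\dot X=u(X,t)$, $X(0,x)=x$ is well-defined; moreover, because $u|_{\partial\Omega}=0$, no trajectory can cross $\partial\Omega$, and since $u\to0$ at infinity with the stated regularity one gets a uniform bound on the speed of propagation on each time interval. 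Hence $\mathrm{supp}\,\rho(\cdot,t)\subseteq B_{R(t)}\cap\overline\Omega$ for some finite $R(t)$, and in particular the vacuum region $\{\rho(\cdot,t)=0\}$ remains a neighborhood of infinity. This localization is what lets all the integrals below converge.

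The second step is to propagate the lower bound on the entropy. In the non-vacuum region the entropy $s$ satisfies a transport equation with a nonnegative source coming from viscous dissipation and the sign of $\mathrm{div}\,(S\cdot u)$; concretely, combining \eqref{CNS1}--\eqref{CNS3} with the state equations \eqref{STAEQ}, one derives $\rho\theta(\partial_t s+u\cdot\nabla s)=\kappa\Delta\theta+(\text{viscous dissipation})\geq \text{viscous dissipation}\geq0$ (with $\kappa=0$ here), so along particle paths $s$ is nondecreasing; hence $s(X(t,x),t)\geq s_0(x)\geq\underline s_0$ wherever $\rho>0$. Via $p=Ae^{s/c_v}\rho^\gamma$ this yields the pointwise bound $p(x,t)\geq Ae^{\underline s_0/c_v}\rho^\gamma(x,t)\geq 0$, i.e.\ the pressure is everywhere nonnegative and controlled below by a fixed constant times $\rho^\gamma$. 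This is the crucial mechanism: positive mass forces, by conservation of mass and Hölder's inequality on the bounded support, a positive lower bound for $\int\rho^\gamma$, hence for $\int p$.

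The third step is a virial-type / momentum-functional identity that produces the contradiction, and here the geometry of $\Omega$ enters. For the exterior domain case I would test the momentum equation \eqref{CNS2} against $x$ (or against a suitable radial vector field equal to $x$ outside a neighborhood of $\partial B_{r_0}$), and for the half-space against $x_N$ in the normal direction or $x$ tangentially; the point of restricting to these two geometries is precisely that one can choose a vector field $\Phi$ with $\Phi\cdot n=0$ on $\partial\Omega$ (or otherwise controllable boundary terms), so that the boundary integrals arising from integration by parts in the stress and convection terms either vanish (using $u|_{\partial\Omega}=0$) or have a favorable sign. One then obtains an ODE differential inequality for a functional like $F(t)=\int_\Omega\rho u\cdot\Phi\,dx$ or the second moment $\int_\Omega\rho|x|^2\,dx$, whose second derivative is bounded below by a positive multiple of $\int p\,dx$ minus terms controlled by the (conserved or monotone) total energy; since $\int p\,dx$ has the fixed positive lower bound from Step 2, $F$ grows at least quadratically, while on the other hand Cauchy--Schwarz together with bounded support and bounded energy forces $F(t)$ to grow at most linearly — a contradiction for $t$ large.

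The main obstacle I anticipate is the boundary-term bookkeeping in Step 3: unlike the Cauchy problem, integrating $x\cdot(\text{div}\,S)$ or $x\cdot(\text{div}(\rho u\otimes u))$ by parts leaves surface integrals on $\partial\Omega$, and although $u|_{\partial\Omega}=0$ kills the convective ones and the tangential stress contributions, the normal-derivative stress term $\int_{\partial\Omega}(x\cdot n)\,(S n)\cdot(\cdot)$ need not vanish and need not have a sign for a general domain. The half-space and the exterior-of-a-ball are exactly the cases where $x\cdot n$ is constant (zero, resp.\ $r_0$) on $\partial\Omega$, or where a clean scaling/radial structure makes these terms manageable; making this precise — choosing the test field, identifying which boundary terms survive, and showing the surviving ones are either zero or absorbable into the energy — is the technical heart of the argument. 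A secondary subtlety is justifying all the integrations by parts and the differentiation under the integral sign given only $C^1([0,T];H^m)$ regularity and a density support that is allowed to touch $\partial\Omega$; this is handled by the finite-propagation-speed bound from Step 1 and standard density/cutoff approximation.
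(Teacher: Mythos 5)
Your Step 1 is where the argument fails to close. You only claim finite speed of propagation, $\mathrm{supp}\,\rho(\cdot,t)\subseteq B_{R(t)}$ with $R(t)$ controlled by $\int_0^t\|u(\cdot,\tau)\|_{L^\infty}d\tau$; but nothing in the hypotheses gives a time-uniform bound on $u$, so $R(t)$ is not under control as $t\to\infty$. Both halves of your intended contradiction need the support to stay in a \emph{fixed} bounded set: the lower bound $\int p\,dx\geq c>0$ comes from mass conservation plus H\"older, $m_0=\int\rho\,dx\leq(\int\rho^\gamma dx)^{1/\gamma}|\mathrm{supp}\,\rho(\cdot,t)|^{1-1/\gamma}$, which degenerates if the measure of the support can grow, and your upper bound on the momentum/second-moment functional explicitly invokes ``bounded support.'' The missing ingredient is the rigidity statement of Proposition \ref{prop1} (Xin's key lemma adapted to the boundary setting): on $[X(t;B_{R_0}\cap\Omega)]^c$ the density vanishes, so equations (\ref{CNS2})--(\ref{CNS3}) degenerate to $\mathrm{div}\,S=0$ and $\mathrm{div}(S\cdot u)=0$, hence $S:\nabla u=0$, hence $\partial_iu_j+\partial_ju_i=0$ there, and the decay forced by $u\in H^m(\Omega)$ then gives $u\equiv0$ outside the transported support; consequently particles outside $B_{R_0}$ never move and $\mathrm{supp}\,\rho(\cdot,t),\ \mathrm{supp}\,u(\cdot,t)\subseteq B_{R_0}$ for all $t$, a fixed ball. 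This is a structural use of the equations in the vacuum region, not a propagation-speed estimate, and without it neither the pressure lower bound nor the upper bound on your functional is available. (Your Step 2, the entropy monotonicity giving $p\geq Ae^{\underline s_0/c_v}\rho^\gamma$ on the support, is fine and matches Proposition \ref{prop2}; energy conservation is not actually needed for Theorem \ref{thm1}, only for the one-dimensional Theorem \ref{thm2}.)

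Your Step 3 is a plan rather than a proof, and it is precisely where the paper's new content lies. For the exterior domain, constancy of $x\cdot n$ on $\partial B_{r_0}$ (which equals $-r_0$, not $0$) does not make the stress boundary term vanish or acquire a sign, so testing (\ref{CNS2}) with $x$ itself does not work; the paper instead constructs explicit radial weights $g(|x|)=r_0^{-N}-|x|^{-N}$ and $f$ with $\nabla f(|x|)=g(|x|)x$, chosen so that $g$ vanishes on $\partial B_{r_0}$ (which, together with $u|_{\partial\Omega}=0$ and the vanishing of $u$ and $\nabla u$ on $\partial B_{R_0}$ from Proposition \ref{prop1}, kills all boundary integrals), $\Delta(g(|x|)x)=0$ and $\nabla\mathrm{div}(g(|x|)x)=0$ (so the viscous contribution vanishes identically after two integrations by parts), and $\mathrm{div}(g(|x|)x)=N/r_0^N>0$ (so the pressure enters with a fixed positive constant). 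Then $\frac{d}{dt}\int\rho f\,dx=\int g\rho u\cdot x\,dx$ and $\frac{d}{dt}\int g\rho u\cdot x\,dx\geq \frac{N}{r_0^N}\int p\,dx$ give quadratic growth of $\int\rho f\,dx$, contradicting the constant bound $f(R_0)m_0$. The half-space case is the easier analogue with weights $x_N^2$ and $x_Ne_N$, where the weight itself vanishes on the flat boundary. If you supply the rigidity lemma of the previous paragraph and exhibit such a test field with the stated properties, your outline becomes essentially the paper's proof; as written, both of these steps are asserted as hoped-for rather than established.
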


For a special case, the assumption (\ref{ASSU}) in the above theorem can
be removed, and in fact we have the following:

\begin{theorem}
  \label{thm2}
If $N=1$, then the result in Theorem \ref{thm1} still holds without the assumption (\ref{ASSU}).
\end{theorem}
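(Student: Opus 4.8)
My plan is to revisit the proof of Theorem 1.1 in one space dimension and check exactly where the lower entropy bound (ASSU) was used, then replace that ingredient by a dimension-specific argument. In $N=1$, the momentum equation reads $\partial_t(\rho u)+\partial_x(\rho u^2)-(2\mu+\lambda)\partial_{xx}u+\partial_x p=0$, and the density support $\{x:\rho(x,t)>0\}$ remains, by the transport structure of \eqref{CNS1} and the finite speed of propagation of the characteristics of $u$, an interval (or finite union of intervals) contained in the image of $B_{R_0}$ under the flow map. The key quantities to track are the total mass $M=\int_\Omega\rho_0\,dx>0$ (conserved), the momentum $\int_\Omega\rho u\,dx$, and a weighted functional — in Xin's original argument one uses $\int x\rho u\,dx$ or $\int\rho u\,dx$ on the support — whose evolution is controlled by the pressure. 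The point of the entropy bound in higher dimensions is to keep the pressure $p=Ae^{s/c_v}\rho^\gamma$ from being too small relative to $\rho^\gamma$ as the gas expands; in one dimension one can instead exploit that the density support cannot spread faster than linearly and that a \emph{bound above} on the entropy (which one gets for free, see below) suffices.

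The first step is to establish that along any classical solution, on the (bounded, time-dependent) density support the entropy is bounded \emph{from above} by its initial supremum: because the flow is smooth and $\rho>0$ there, the internal energy equation \eqref{CNS3} combined with \eqref{CNS1} yields a transport equation for $s$ with a nonnegative viscous production term, $\rho\theta\, D_t s = (\text{viscous dissipation})\geq0$ in the region where $\rho>0$ — wait, the sign actually gives $D_t s\geq 0$, so entropy only increases along particles; hence the relevant a priori control is on $\inf s$, not $\sup s$. So the honest statement is: in $N=1$ one does \emph{not} need to assume a lower bound on $s_0$ because one can instead run the energy/virial estimate using only the total energy $\int_\Omega(\frac12\rho u^2+\rho e)\,dx$, which is conserved (no boundary flux since $u|_{\partial\Omega}=0$ and everything is compactly supported away from $\infty$), together with the fact that in one dimension the pressure integrated over the support, $\int p\,dx$, can be bounded below purely in terms of $M$, the length $L(t)$ of the support, and the total internal energy $\mathcal E_{\mathrm{int}}$, via Jensen's inequality applied to the convex map — here $\gamma>1$ is essential — without reference to $s$ at all. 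Concretely, $\int p\,dx = R\int\rho\theta\,dx = (\gamma-1)\mathcal E_{\mathrm{int}}$, which is bounded by the conserved total energy, and this is the replacement for the entropy lower bound.

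The second step is the virial-type identity. Following Xin, define $F(t)=\int_\Omega m(x)\rho u\,dx$ for a suitable weight $m$ (e.g. $m(x)=x$ on the half line, adjusted near $\partial\Omega$ so that $m$ vanishes there and the boundary terms from integrating $-(2\mu+\lambda)\partial_{xx}u$ against $m$ drop out — this is where being in $N=1$ with the explicit geometry of $\mathbb R_+$ or $(r_0,\infty)$ keeps the boundary contributions manageable). Differentiating and using \eqref{CNS2}, one gets $F'(t) = \int(2\mu+\lambda)u\,\partial_{xx}m\,dx + \int \rho u^2 m'\,dx + \int p\, m'\,dx + (\text{boundary terms})$. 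Choosing $m$ with $m'\equiv1$ away from the boundary makes the pressure term exactly $\int p\,dx=(\gamma-1)\mathcal E_{\mathrm{int}}$ on the bulk, and the viscous term vanishes there since $\partial_{xx}m=0$. One then plays this against the conservation of energy and of mass, and against a bound $L(t)\leq L_0 + C\sqrt{t}$ or $L(t)\leq L_0+Ct$ on the support length coming from $\int\rho u^2\leq 2\mathcal E$ and Cauchy–Schwarz; the upshot is a differential inequality forcing $F$ (equivalently a primitive quantity like $\int\int \rho u$) to grow without bound while simultaneously being bounded by conserved quantities times a power of $t$, a contradiction if the solution persists for all time — hence finite-time blowup.

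The main obstacle I anticipate is the treatment of the boundary terms in the virial identity: unlike the Cauchy problem, we must integrate against a weight that respects $u|_{\partial\Omega}=0$ yet still has $m'\equiv1$ on the bulk, and we must be sure that no uncontrolled flux enters at $\partial\Omega$ from the viscous stress $(2\mu+\lambda)\partial_x u$, whose boundary value is \emph{not} zero. Resolving this requires choosing $m$ to vanish to first order at $\partial\Omega$ (so $m(0)=0$ in case (i), $m(r_0)=0$ in case (ii)) and absorbing the resulting $O(1)$ contribution of $\int u\,\partial_{xx}m\,dx$ near the boundary into the energy estimate; since the density support stays a bounded distance from — or may touch — $\partial\Omega$, one checks that $u$ near $\partial\Omega$ is controlled by the dissipation $\int|\partial_x u|^2$, which is in turn controlled by the energy balance. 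Once the boundary bookkeeping is done, the rest is the one-dimensional specialization of the estimates already carried out for Theorem 1.1, with $(\gamma-1)\mathcal E_{\mathrm{int}}$ playing the role that $e^{\underline s_0/c_v}$ played there.
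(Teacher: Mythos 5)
Your high-level idea is the right one and matches the paper's: in one dimension the entropy lower bound can be replaced by conservation of the total energy $\int\rho E\,dx=\mathcal E_0>0$ (the boundary fluxes $u(\rho E+p)$ and $S\cdot u$ vanish at $x=0$ and at $x=R_0$), fed into the virial identity $\frac{d}{dt}\int\rho u x\,dx=\int(\rho u^2+p)\,dx$ with the weight $m(x)=x$, which already vanishes at the boundary so the viscous term integrates away exactly (your worry about ``absorbing'' boundary viscous contributions into the dissipation is unnecessary: $\int\partial_{xx}u\,x\,dx=[x\partial_xu-u]_0^{R_0}=0$ using $u(0)=0$ and $u(R_0)=\partial_xu(R_0)=0$). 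However, there are two genuine gaps. First, you replace the entropy bound by the identity $\int p\,dx=(\gamma-1)\mathcal E_{\mathrm{int}}$ and treat this as controlled by the conserved energy; but $\mathcal E_{\mathrm{int}}$ alone is \emph{not} conserved (the work term $p\,\partial_xu$ has no sign), and a priori it could decay as internal energy converts to kinetic energy, so you have not produced a time-uniform positive lower bound for the virial derivative. The missing step is the pointwise inequality the paper uses, $\rho u^2+p\geq\min\{2,\gamma-1\}\rho E$ (split according to $\gamma\geq3$ or $1<\gamma<3$), which exploits that the kinetic term $\int\rho u^2$ enters the virial derivative with the \emph{same constant} weight as the pressure, so that $\int(\rho u^2+p)\,dx\geq\min\{2,\gamma-1\}\mathcal E_0$ for all $t$; this is precisely what is special to $N=1$ (in higher dimensions the kinetic terms carry the weight $g(|x|)$, which vanishes at the boundary, and the absorption fails).

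Second, your closing contradiction is wrong as stated. You allow the support to spread, invoking a bound $L(t)\leq L_0+Ct$ ``from $\int\rho u^2\leq 2\mathcal E$ and Cauchy--Schwarz'' — but the energy only controls the center-of-mass momentum, not the $L^\infty$ speed of the support boundary, so that bound does not follow; and even if it did, an upper bound $\int\rho x^2\,dx\leq M_0(L_0+Ct)^2$ growing quadratically does \emph{not} contradict a quadratically growing lower bound with an unrelated coefficient. The argument needs the much stronger confinement statement of Proposition \ref{prop1}: since $\rho$ vanishes outside $X(t;B_{R_0}\cap\Omega)$, the equations force $S:\nabla u=0$ there, hence $u\equiv0$ there, hence the particle trajectories outside $B_{R_0}$ are stationary and $\mathrm{supp}\,\rho(\cdot,t)\subseteq B_{R_0}$ for \emph{all} $t$. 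This gives the constant upper bound $\int\rho x^2\,dx\leq R_0^2K_0$, against which the lower bound $K_2+2K_1t+\min\{2,\gamma-1\}\mathcal E_0t^2$ yields the finite-time contradiction. Your phrase ``contained in the image of $B_{R_0}$ under the flow map'' is trivially true but insufficient, since that image could a priori expand; without Proposition \ref{prop1} (or an equivalent) the proof does not close.
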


{\begin{remark}
It should be pointed out that, same as in \cite{XinBlowup,XINYAN}, the existence of local solution $(\rho, u, \theta)$ in the class $C^1([0,T]; H^m(\Omega))$ to system (\ref{CNS1})--(\ref{CNS3}), subject to (\ref{BC})--(\ref{IC}), is still open. If following the arguments
in \cite{CHOCLASS,CK}, one can obtain a unique solution $(\rho, u, p)$ ($p$ is chosen as an unknown) in the class
$$
(\rho, p)\in C^1([0.T]; H^m(\Omega)), \quad u\in C^1([0,T]; \dot{H}^m(\Omega)),
$$
here $\dot{H}^m(\Omega):=\{f\in L_{loc}(\Omega)|\nabla^\alpha f\in L^2(\Omega), 1\leq|\alpha|\leq m\}$, which unfortunately does not
meet the requirements on $(\rho, u, \theta)$ in Theorem \ref{thm1} and Theorem \ref{thm2}. The existence of solution $(\rho, u, \theta)$ in the class $C([0,T]; H^1(\mathbb R))$ (which can be further strengthened in $C^1([0,T]; H^m(\mathbb R))$ if putting more regularity assumptions and compatibility conditions
on the initial data) to the Cauchy problem of (\ref{CNS1})--(\ref{CNS3}), in the presence of vacuum at the far field only, has
recently been proved by the second author \cite{LIXIN17}. Unfortunately, since the crucial assumption (1.12) there does not hold if $\rho_0$ is compactly supported, the argument in \cite{LIXIN17} does not leads to the desired existence of solution
 $(\rho, u, \theta)$ required in Theorem \ref{thm1} and Theorem \ref{thm2} either.
\end{remark}}

Some comments on the proofs of Theorem \ref{thm1} and Theorem \ref{thm2}
are stated as follows. Note that the main ingredients of the proofs in
\cite{XinBlowup,TanWang,ChoJin,Rozanova,LUO,DuLiZhang,XINYAN} are
multiplying the transport equation by $|x|^2$, and correspondingly
multiplying the momentum equations by $x$, where the key observation is
that the term $\int_\Omega\text{div}S\cdot xdx$ vanishes, if either the
domain under consideration has no physical boundary or the isolated mass
group never touches the boundary. Unfortunately, it is not the case
when there is some physical boundary of the domain, and one can not
expect that the isolated mass group will never touch the boundary, even if
it is initially away from the boundary. Therefore, we will always encounter
some boundary integrals coming from $\int_\Omega\text{div}S\cdot xdx$
after integration by parts. To overcome this difficulty, taking the case
$\Omega=\mathbb R^N\setminus B_{r_0}$ as an example, we multiply the
transport equation by some
positive function $f(|x|)$, rather than the very special function $|x|^2$,
and correspondingly multiply the momentum equation by $f'(|x|)\frac{x}{|x|}$,
and encounter the term $\int_\Omega\text{div}Sf'(|x|)\frac{x}{|x|}dx$. To
ensure that this last term vanishes, by integration by parts, it suffices to
ask for $f'(|x|)|_{\partial\Omega}=0$ and $\Delta(f'(|x|)\frac{x}{|x|})=0$ on $\Omega$. The existence of such an auxiliary function $f$ can be easily verified, and consequently one can obtain the finite time
blow up results.

\section{Proofs of the theorems}

Given a velocity field $u\in C^1([0,T]; H^m(\Omega))$,
for some $m>[\frac N2]+2$, with
$u=0$ on $\partial\Omega$. Denote by $X(t; x)$ the particle path,
which goes along the velocity field $u$ and starts from $x\in \Omega$
at time zero:
\begin{equation}\label{particle}
  \left\{
  \begin{array}{l}
  \frac{d}{dt}X(t; x)=u(X(t;x),t),\quad t\in(0,T),\\
  X(0;x)=x.
  \end{array}
  \right.
\end{equation}
For any subset $K\subseteq \Omega$, for simplicity we denote
$$
X(t; K):=\{y=X(t;x)|x\in K\}.
$$
By the Sobolev embedding, one has $u\in C^1([0,T]; C^2(\overline\Omega))$,
and thus, by the standard
existence and uniqueness results for ordinary differential equations,
the particle pathes are well-defined, and
different particle pathes never meet each other. Moreover, at each time, any point $y\in\Omega$ can be reached by
some particle path, in other words, one has $X(t; \Omega)= \Omega$. Using these facts, one can easily verify that
$$
[X(t;K)]^c\cap \Omega=X(t; K^c\cap \Omega),
\quad
X(t; K_1)\cup X(t; K_2)=X(t; K_1\cup K_2),
$$
for any subsets $K$, $K_1$ and $K_2$ of $\Omega$. These facts will be used
later without any further mentions.

Recalling the expression of the stress tensor $S$, we have
\begin{align*}
  S:\nabla u=&\mu(\nabla u+(\nabla u)^T):\nabla u+\lambda(\text{div}\,u)^2\\
  =&\frac\mu2|\nabla u+(\nabla u)^T|^2+\lambda(\text{div}\,u)^2,
\end{align*}
which simply implies that
\begin{equation}\label{nonneg1}
 S:\nabla u\geq\frac\mu2|\nabla u+(\nabla u)^T|^2\geq0,
\end{equation}
if $\lambda\geq0$. While if $\lambda<0$, by transforming $S:\nabla u$ as
\begin{align*}
  S:\nabla u
  =&\frac\mu2\sum_{ {i\not=j}}(\partial_iu_j+\partial_ju_i)^2+(2\mu+N\lambda)\sum_{i=1}^N(\partial_iu_i)^2\\
  &-(N-1)\lambda\sum_{i=1}^N(\partial_iu_i)^2+\lambda\sum_{ {i\not=j}}\partial_iu_i\partial_ju_j\\
  =&\frac\mu2\sum_{ {i\not=j}}(\partial_iu_j+\partial_ju_i)^2+(2\mu+N\lambda)\sum_{i=1}^N(\partial_iu_i)^2 -\frac\lambda2\sum_{ {i\not=j}}(\partial_iu_i-\partial_ju_j)^2,
\end{align*}
and recalling (\ref{COEFF}), we still have
\begin{equation}\label{nonneg2}
  S:\nabla u\geq \frac\mu2\sum_{ {i\not=j}}(\partial_iu_j+\partial_ju_i)^2+(2\mu+N\lambda)\sum_{i=1}^N(\partial_iu_i)^2\geq0.
\end{equation}

Some preparations are required before proving our main results, that is the
following two propositions.

\begin{proposition}\label{prop1}
Let $(\rho, u, \theta)\in C^1([0,T]; H^m(\Omega))$, with $m>[\frac N2]+2$, be a classical
solution to system (\ref{CNS1})--(\ref{CNS3}), subject to (\ref{BC})--(\ref{IC}). Suppose that
$$
supp~\rho_0\subseteq B_{R_0},
$$
for some positive number $R_0$ (with $R_0>r_0$, if $\Omega=\mathbb R^N\setminus \overline B_{r_0}$).
Then, we have
$$
supp~\rho(\cdot,t)\subseteq B_{R_0},\quad supp~u(\cdot,t)\subseteq B_{R_0}.
$$
\end{proposition}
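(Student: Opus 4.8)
The plan is to show that both $\rho(\cdot,t)$ and $u(\cdot,t)$ remain supported in the fixed ball $B_{R_0}$ for all $t\in[0,T]$, by exploiting the fact that the system outside the support of $\rho$ degenerates, and by transporting the support of $\rho$ along the flow. First I would handle the density. Since $\rho$ solves the transport equation $\partial_t\rho+\mathrm{div}\,(\rho u)=0$ with $u\in C^1([0,T];C^2(\overline\Omega))$ by Sobolev embedding, the representation $\rho(X(t;x),t)=\rho_0(x)\exp\!\big(-\int_0^t \mathrm{div}\,u(X(s;x),s)\,ds\big)$ shows that $\rho(\cdot,t)$ vanishes exactly off $X(t;\mathcal O_+)$, where $\mathcal O_+=\{\rho_0>0\}\subseteq B_{R_0}$. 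Hence $\mathrm{supp}\,\rho(\cdot,t)\subseteq \overline{X(t;\mathcal O_+)}\subseteq X(t;\overline{\mathcal O_+}\cap\Omega)$ (using that the flow map is a homeomorphism of $\overline\Omega$ and carries closed sets to closed sets, which follows from the ODE well-posedness facts already recorded in the text). So it suffices to prove $X(t;\overline{\mathcal O_+}\cap\Omega)\subseteq B_{R_0}$; equivalently, no particle starting inside $B_{R_0}$ ever leaves it.

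The key step is therefore to control the velocity field outside the support of $\rho$. Outside $\mathrm{supp}\,\rho$ we have $\rho=0$, so the momentum equation $\partial_t(\rho u)+\mathrm{div}\,(\rho u\otimes u)-\mathrm{div}\,S+\nabla p=0$ reduces, using $p=R\rho\theta=0$ there, to $\mathrm{div}\,S=0$, i.e. $\mu\Delta u+(\mu+\lambda)\nabla\,\mathrm{div}\,u=0$ on the open set $\Omega\setminus\mathrm{supp}\,\rho(\cdot,t)$, for each fixed $t$. I would then run a continuity/bootstrap argument in $t$: let $T_*$ be the supremum of times $\tau$ such that $\mathrm{supp}\,\rho(\cdot,t)\subseteq B_{R_0}$ for all $t\le\tau$; this set is nonempty (it holds at $t=0$) and we want $T_*=T$. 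On $[0,T_*)$, the domain $\Omega\setminus \overline B_{R_0}$ is contained in the vacuum region, $u$ is a bounded solution of the elliptic system $\mathrm{div}\,S=0$ there with the boundary data $u=0$ on $\partial\Omega$ (for the half-space or exterior-ball case) and $u\to 0$ at infinity, plus the matching data on $\partial B_{R_0}$. The cleanest route is an energy estimate: multiply $\mathrm{div}\,S=0$ by $u$ and integrate over $\Omega\setminus \overline B_{R_0}$; the boundary terms on $\partial\Omega$ vanish because $u=0$ there, the terms at infinity vanish by decay, and one is left with $\int_{\partial B_{R_0}}(S\nu)\cdot u + \int_{\Omega\setminus \overline B_{R_0}} S:\nabla u =0$. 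One then wants to conclude $u\equiv 0$ on $\Omega\setminus\overline B_{R_0}$; the point is that $u$ is already $0$ near $\partial B_{R_0}$ up to the boundary from inside $B_{R_0}$ once we know the support of $\rho$ lies strictly inside — more precisely, since $\mathrm{supp}\,\rho_0\subseteq B_{R_0}$ and the support is transported by the flow, as long as particles starting outside $B_{R_0}$ never enter, the exterior of $B_{R_0}$ stays vacuum, $u$ there satisfies the homogeneous elliptic system with zero data on $\partial\Omega\cup\{|x|=R_0\}$ inherited from the (interior) vacuum, forcing $u\equiv 0$ on $\Omega\setminus B_{R_0}$ by uniqueness (the quadratic form $\int S:\nabla u$ is coercive up to rigid motions, and the boundary conditions kill those). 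With $u\equiv 0$ outside $B_{R_0}$, no particle crosses $\partial B_{R_0}$ from either side, so the support of $\rho$ cannot expand and $T_*=T$; feeding this back gives $\mathrm{supp}\,u(\cdot,t)\subseteq B_{R_0}$ as well.

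I expect the main obstacle to be making the bootstrap rigorous, in particular justifying that the vacuum region genuinely contains $\Omega\setminus\overline B_{R_0}$ for all $t<T_*$ rather than merely at $t=0$, and handling the possibility that $\mathrm{supp}\,\rho(\cdot,t)$ touches $\partial B_{R_0}$ from inside. The subtle point is circularity: one uses $u=0$ outside $B_{R_0}$ to conclude the support does not grow, but one needs the support not to grow to get $u=0$ outside. This is resolved by the standard device of defining $T_*$ as above and showing the set $\{t:\mathrm{supp}\,\rho(\cdot,t)\subseteq B_{R_0}\}$ is both open and closed in $[0,T]$: closedness is clear by continuity of the flow, and openness follows because at $t=T_*$ the elliptic argument gives $u(\cdot,T_*)\equiv 0$ on $\Omega\setminus B_{R_0}$, so by continuity of $u$ in $C^1$ the normal velocity on $\partial B_{R_0}$ stays small for a short time, and a Gronwall estimate on $\frac{d}{dt}|X(t;x)|^2$ for particles near $\partial B_{R_0}$ shows none escapes on a slightly larger interval. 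One secondary technical care is the decay of $u$ at infinity and the integrability needed to discard the boundary terms at infinity in the energy identity; this is covered by $u(\cdot,t)\in H^m(\Omega)$, which gives decay and enough integrability for the integrations by parts on the unbounded exterior domain.
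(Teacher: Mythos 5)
There is a genuine gap, and it sits exactly at the step you flagged as delicate. Your argument for $u\equiv 0$ outside $B_{R_0}$ uses only the momentum equation: in the vacuum region you get $\mathrm{div}\,S=0$, and you then try to conclude $u\equiv0$ by an energy identity on $\Omega\setminus\overline B_{R_0}$. But the boundary term $\int_{\partial B_{R_0}}(S\nu)\cdot u$ cannot be discarded: vacuum of $\rho$ on the exterior gives no information whatsoever about the values of $u$ on $\partial B_{R_0}$, and your claim that $u$ has ``zero data on $\{|x|=R_0\}$ inherited from the interior vacuum'' is precisely the conclusion of the proposition, so the argument is circular. Indeed, the Lam\'e system $\mu\Delta u+(\mu+\lambda)\nabla\,\mathrm{div}\,u=0$ on the exterior region, with $u=0$ on $\partial\Omega$ and decay at infinity but unconstrained Dirichlet data on $\partial B_{R_0}$, has plenty of nontrivial solutions, so no uniqueness statement can close this step from the momentum equation alone. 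The missing idea is to also use the energy equation (\ref{CNS3}) in the vacuum region: there one has both $\mathrm{div}\,S=0$ and $\mathrm{div}(S\cdot u)=0$, hence the pointwise algebraic identity $S:\nabla u=\mathrm{div}(S\cdot u)-\mathrm{div}\,S\cdot u=0$, and then (\ref{nonneg1})--(\ref{nonneg2}) force $\partial_iu_j+\partial_ju_i=0$ there, i.e.\ $u$ is a rigid motion on that set; since $u(\cdot,t)\in H^m(\Omega)$, the only such field is $u\equiv0$. No elliptic boundary-value problem, and no boundary term, is needed.

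Your time-bootstrap also does not close as written, and in fact is unnecessary. The openness step (``normal velocity stays small for a short time, so a Gronwall estimate shows no particle escapes'') cannot give containment in the \emph{same} ball $B_{R_0}$: small velocity only yields small displacement, so the support could still leave $B_{R_0}$ slightly, which destroys the definition of $T_*$. The paper avoids the continuity argument entirely by working on the flow image $X(t;B_{R_0}^c\cap\Omega)=[X(t;B_{R_0}\cap\Omega)]^c$ rather than on the fixed exterior: the transport representation shows $\rho(\cdot,t)\equiv0$ on this set for \emph{every} $t$ with no bootstrap, the pointwise argument above gives $u(\cdot,t)\equiv0$ there, and then $\frac{d}{dt}X(t;x)=u(X(t;x),t)=0$ shows particles starting in $B_{R_0}^c\cap\Omega$ never move, so the vacuum region is exactly $B_{R_0}^c\cap\Omega$ and both support statements follow at once. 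Your treatment of the density via the flow map is fine; it is the vanishing of $u$ outside $B_{R_0}$ that needs the energy equation, not an elliptic uniqueness or a continuity-in-time argument.
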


\begin{proof}
By the definition of $X(t; x)$, and using (\ref{CNS1}), we deduce
\begin{align*}
  \frac{d}{dt}\rho(X(t; x),t)
  =&-\text{div}\,u(X(t;x),t)\rho(X(t;x),t),
\end{align*}
from which, by assumption, and recalling $u\in C^1([0,T]; C^2(\overline\Omega))$, we have
\begin{align}
  \rho(X(t;x),t)
  =&\exp\left\{-\int_0^t\text{div}\,u(X(\tau;x),\tau)d\tau\right\}\rho_0(x)=0,\label{rhoeq}
\end{align}
for any $x\in B_{R_0}^c\cap \Omega$. Hence, one has
\begin{equation}\label{rhovanish}
\rho(\cdot,t)\equiv0,\quad\mbox{ on } X(t;B_{R_0}^c\cap \Omega)=[X(t;B_{R_0}\cap \Omega)]^c.
\end{equation}
Thanks to this, it follows from equations (\ref{CNS2}) and (\ref{CNS3}) that
$$
\text{div}\,S=\text{div}\,(S\cdot u)=0,\quad \mbox{ on }[X(t;B_{R_0}\cap \Omega)]^c\times\{t\},
$$
and thus
\begin{equation*}
  S:\nabla u=\text{div}\,(S\cdot u)-\text{div}\,S\cdot u=0,\quad \mbox{ on }[X(t;B_{R_0}\cap \Omega)]^c\times\{t\}.
\end{equation*}
As a result, it follows from (\ref{nonneg1}) and (\ref{nonneg2}) that
\begin{equation*}
  \partial_iu_j+\partial_ju_i=0,\quad 1\leq i,j\leq N, \quad\mbox{ on }[X(t;B_{R_0}\cap \Omega)]^c\times\{t\}.
\end{equation*}
This and the assumption $u\in C^1([0,T]; H^m(\Omega))$ imply
\begin{equation}\label{uvanish}
u(\cdot,t)\equiv0, \quad\mbox{ on }[X(t;B_{R_0}\cap \Omega)]^c=X(t; B_{R_0}^c\cap \Omega).
\end{equation}
Thus, for any $x\in B_{R_0}^c\cap \Omega$, one has
$$
\frac{d}{dt}X(t; x)=u(X(t;x),t)=0,
$$
which implies $X(t; x)=x$, for any $x\in B_{R_0}^c\cap \Omega$. Therefore, we have
$$[X(t;B_{R_0}\cap \Omega)]^c=X(t; B_{R_0}^c\cap \Omega)=B_{R_0}^c\cap \Omega,$$
and consequently, the conclusion follows from (\ref{rhovanish}) and (\ref{uvanish}).
\end{proof}

\begin{proposition}\label{prop2}
Let $(\rho, u, \theta)\in C^1([0,T]; H^m(\Omega))$, with $m>[\frac N2]+2$, be a classical
solution to system (\ref{CNS1})--(\ref{CNS3}), subject to (\ref{BC})--(\ref{IC}). Suppose that the compatibility condition (\ref{COMP}) holds.
Then, we have
 $$
 \rho(x,t)>0\mbox{ and}\quad\theta(x,t)>0,
 $$
for any $x\in\mathcal O_+(t):=X(t;\mathcal O_+)$, and
$$
\inf_{x\in\mathcal O_+(t)}\log(\theta\rho^{1-\gamma})\geq\inf_{x\in\mathcal O_+}\log(\theta_0\rho_0^{1-\gamma}).
$$
\end{proposition}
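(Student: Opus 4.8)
The plan is to transport all the needed pointwise information along the particle paths $X(t;x)$ of (\ref{particle}). For the positivity of the density there is nothing new: the same integration along characteristics that produced (\ref{rhoeq}) gives, for every $x\in\Omega$,
$$
\rho(X(t;x),t)=\exp\Big\{-\int_0^t\text{div}\,u(X(\tau;x),\tau)\,d\tau\Big\}\rho_0(x),
$$
and since $u\in C^1([0,T];C^2(\overline\Omega))$ the exponential factor is finite and positive, so $\rho(X(t;x),t)>0$ whenever $\rho_0(x)>0$; that is, $\rho>0$ on $\mathcal O_+(t)=X(t;\mathcal O_+)$.

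Next I would establish $\theta>0$ on $\mathcal O_+(t)$. Expanding the energy equation (\ref{CNS3}) with $E=\frac{|u|^2}2+c_v\theta$, subtracting $u$ times the momentum equation (\ref{CNS2}), and using (\ref{CNS1}) together with $p=R\rho\theta$, one obtains the classical internal-energy identity
$$
c_v\rho\big(\partial_t\theta+u\cdot\nabla\theta\big)=S:\nabla u-R\rho\theta\,\text{div}\,u\qquad\text{on }\Omega\times(0,T).
$$
Fixing $x\in\mathcal O_+$ and setting $g(t):=\theta(X(t;x),t)$, $h(t):=\text{div}\,u(X(t;x),t)$, and dividing by $\rho(X(t;x),t)>0$, this reduces to the scalar linear ODE $c_vg'(t)+Rg(t)h(t)=(S:\nabla u)(X(t;x),t)\big/\rho(X(t;x),t)$, whose right-hand side is $\ge0$ by (\ref{nonneg1})--(\ref{nonneg2}). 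Multiplying by the integrating factor $\exp\{\tfrac R{c_v}\int_0^t h\,d\tau\}$ and integrating gives $g(t)\ge\theta_0(x)\exp\{-\tfrac R{c_v}\int_0^t h\,d\tau\}>0$, using $\theta_0(x)>0$ from the compatibility condition (\ref{COMP}); hence $\theta>0$ on $\mathcal O_+(t)$.

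Finally, with $\rho,\theta$ positive and $C^1$ along every path starting in $\mathcal O_+$, the function $t\mapsto s(X(t;x),t)=c_v\log\!\big(\tfrac RA\theta\rho^{1-\gamma}\big)(X(t;x),t)$ is $C^1$; differentiating along the path, inserting $\tfrac{d}{dt}\rho(X(t;x),t)=-\rho\,\text{div}\,u$ and the temperature identity above, and using the thermodynamic relation $c_v=\tfrac R{\gamma-1}$, all the $\text{div}\,u$ terms cancel and one is left with $\tfrac{d}{dt}s(X(t;x),t)=(S:\nabla u)(X(t;x),t)\big/\big(\rho\theta\big)(X(t;x),t)\ge0$. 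Therefore $s(X(t;x),t)\ge s_0(x)$, i.e.\ $\log(\theta\rho^{1-\gamma})(X(t;x),t)\ge\log(\theta_0\rho_0^{1-\gamma})(x)$ by the definition (\ref{ICS}); taking the infimum over $x\in\mathcal O_+$ and using that $X(t;\cdot)$ is a bijection of $\mathcal O_+$ onto $\mathcal O_+(t)$ gives the claimed bound.

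The only delicate point is the ordering of the argument: the entropy computation needs $\theta>0$ on the relevant paths, so positivity of $\theta$ must be proved first, and that step genuinely uses both the sign $S:\nabla u\ge0$ from (\ref{nonneg1})--(\ref{nonneg2}) and the exact cancellation coming from $c_v=R/(\gamma-1)$. Beyond this bookkeeping there is no real obstacle — everything reduces to scalar ODE comparison along characteristics.
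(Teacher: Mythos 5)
Your proposal is correct and follows essentially the same route as the paper: positivity of $\rho$ from the characteristic formula (\ref{rhoeq}), derivation of the internal-energy (temperature) equation (\ref{thetaeq}), an ODE comparison along particle paths using $S:\nabla u\geq 0$ and $c_v=R/(\gamma-1)$ to get $\theta>0$, and then monotonicity of the entropy-type quantity along paths. The only cosmetic difference is that the paper tracks $l(t)=(\theta\rho^{1-\gamma})(X(t;x),t)$ directly and takes the logarithm at the end, whereas you differentiate the logarithm (i.e.\ the entropy) itself, which is equivalent once positivity is in hand.
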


\begin{proof}
For any $x\in\mathcal O_+$, by equation (\ref{rhoeq}), we have $\rho(X(t;x),t)>0$. Hence, $\rho(x,t)>0$, for any $x\in\mathcal O_+(t)$. To prove the positivity of $\theta$ on $\mathcal O_+(t)$, we need to derive the equation for $\theta$.
Using equation (\ref{CNS1}) and the state equation $e=c_v\theta$, it follows from equation (\ref{CNS3}) that
\begin{equation*}
  \rho\left[\partial_t\left(\frac{|u|^2}{2}+c_v\theta\right)+u\cdot\nabla \left(\frac{|u|^2}{2}+c_v\theta\right)\right] +\text{div}(up)=\text{div}(S\cdot u).
\end{equation*}
Multiplying equation (\ref{CNS2}) by $u$, and using equation (\ref{CNS1}) yields
\begin{equation*}
  \rho\left[\partial_t\left(\frac{|u|^2}{2}\right)+u\cdot\nabla \left(\frac{|u|^2}{2}\right)\right]-\text{div}(S\cdot u)+S:\nabla u+\text{div}(up)-\text{div}up=0.
\end{equation*}
Subtracting the previous two equations, recalling the state equation $p=R\rho\theta$, one obtains
\begin{equation}\label{thetaeq}
  c_v\rho(\partial_t\theta+u\cdot\nabla\theta)+R\rho\text{div}u\,\theta=S:\nabla u.
\end{equation}
Hence, recalling the nonnegativity of $S:\nabla u$, see (\ref{nonneg1}) and (\ref{nonneg2}), we deduce
\begin{align*}
  \frac{d}{dt}\theta(X(t; x), t)=&\frac{1}{c_v}\frac{1}{\rho}(S:\nabla u-R\rho\text{div}u\,\theta)|_{(X(t;x),t)}\\
  =&\frac{1}{c_v}\left(\frac{ S}{\rho}:\nabla u-R\text{div}u\,\theta\right)\bigg|_{(X(t;x),t)}\geq (1-\gamma)\text{div}u\,\theta|_{(X(t;x),t)},
\end{align*}
from which, recalling that $u\in C^1([0,T]; C^2(\overline\Omega))$, one obtains $\theta(X(t; x),t)>0$, for any $x\in\mathcal O_+$. Therefore,
we have $\theta(x,t)>0$, for any $x\in\mathcal O_+(t)$. This proves the first conclusion.

Now, let us prove the second conclusion. Take arbitrary $x\in\mathcal O_+$,  then $\rho(X(t;x),t)>0$ and $\theta(X(t;x),t)>0$. Set $l(t)=(\theta\rho^{1-\gamma})|_{(X(t;x),t)}$, then it follows from equations (\ref{CNS1}) and (\ref{thetaeq}) that
\begin{align*}
  l'(t)=&(1-\gamma)(\theta\rho^{-\gamma})|_{(X(t;x),t)}\frac{d}{dt}\rho(X(t;x),t)\\
  &+\rho^{1-\gamma}(X(t;x),t)\frac{d}{dt}\theta (X(t;x),t)\\
  =&(1-\gamma)(\theta\rho^{-\gamma})|_{(X(t;x),t)}(-\text{div}u\,\rho)|_{(X(t;x),t)}\\
  &+\rho^{-\gamma}(X(t;x),t)\frac{1}{c_v}(S:\nabla u-R\rho\text{div}u\,\theta)|_{(X(t;x),t)}\\
  =&\frac{1}{c_v}\rho^{-\gamma}S:\nabla u|_{(X(t;x),t)}\geq0.
\end{align*}
Hence, one has
$$
(\theta\rho^{1-\gamma})|_{(X(t;x),t)}=l(t)\geq l(0)=\theta_0(x)\rho_0^{1-\gamma}(x),\quad\forall x\in\mathcal O_+,
$$
from which, by taking the logarithm to both sides of the above inequality yields
$$
\log(\theta\rho^{1-\gamma})|_{(X(t;x),t)}\geq \log(\theta_0(x)\rho_0^{1-\gamma}(x))\geq\inf_{x\in\mathcal O_+}\log(\theta_0\rho_0^{1-\gamma}),\quad\forall x\in\mathcal O_+.
$$
Therefore, we have
$$
\log(\theta(x,t)\rho^{1-\gamma}(x,t))\geq\inf_{x\in\mathcal O_+}\log(\theta_0\rho_0^{1-\gamma}),\quad\forall x\in\mathcal O_+(t),
$$
proving the second conclusion.
\end{proof}

We are now ready to prove the main results.

\begin{proof}[\emph{\textbf{Proof of Theorem \ref{thm1}}}] \textbf{Case I: $\Omega=\mathbb R^N\setminus B_{r_0}$.}
Define two radially symmetric functions $f$ and $g$ on $\mathbb R^N\setminus\{0\}$ as
\begin{equation*}
  f(|x|)=\left\{
  \begin{array}{lr}\vspace{2mm}
  \frac{1}{2r_0}(|x|-r_0)^2,&N=1,\\
  \vspace{2mm}\frac{|x|^2}{2r_0^2}-\log|x|+\log r_0,&N=2,\\
\frac{|x|^2}{2r_0^N}+\frac{1}{N-2}\frac{1}{|x|^{N-2}},&N\geq3,
  \end{array}
  \right.
\end{equation*}
and
$$
g(|x|)=\frac{1}{r_0^N}-\frac{1}{|x|^N}.
$$
Then, one can check that
\begin{eqnarray*}
    &&f(|x|), g(|x|), f'(|x|), g'(|x|)>0,\quad\mbox{ for }|x|>r_0,\\
    &&\nabla f(|x|)=g(|x|)x, \quad\nabla(g(|x|)x)=g(|x|)I+\frac{g'(|x|)}{|x|}x\otimes x,\quad\mbox{ for }|x|>0,\\
    &&\text{div}\,(g(|x|)x)=\frac{N}{r_0^N},\quad\Delta (g(|x|)x)=\nabla\text{div}\,(g(|x|)x)=0,\quad\mbox{ for }|x|>0.
\end{eqnarray*}
Define $m_0, m_1$ and $m_2$ as
\begin{eqnarray*}
  m_0=\int_\Omega\rho_0 dx,\quad m_1=\int_\Omega\rho_0f(|x|)dx,\quad m_2= \int_\Omega g(|x|)\rho_0u_0\cdot xdx.
\end{eqnarray*}
Then, by assumption, we have $m_0>0$.
  Note that, using Proposition \ref{prop1} and the boundary condition (\ref{BC}), we have
  \begin{equation}\label{NNN1}
    \rho|_{\partial B_{R_0}}=p|_{\partial B_{R_0}}=0,\quad u|_{\partial B_{r_0}}=u|_{\partial B_{R_0}}=0,\quad\nabla u|_{\partial B_{R_0}}=0.
  \end{equation}

  Using equation (\ref{CNS1}) and the above boundary conditions,
  it follows from integration by parts that
  \begin{align}
    \frac{d}{dt}\int_{B_{R_0}\setminus B_{r_0}}&\rho f(|x|)dx =\int_{B_{R_0}\setminus B_{r_0}}\partial_t\rho f(|x|)dx \nonumber\\
    =&-\int_{B_{R_0}\setminus B_{r_0}}\text{div}(\rho u)f(|x|)dx
    =\int_{B_{R_0}\setminus B_{r_0}}\rho u\cdot\nabla f(|x|)dx \nonumber\\
    =&\int_{B_{R_0}\setminus B_{r_0}}g(|x|)\rho u\cdot xdx. \label{m1}
  \end{align}
  Noticing that $g|_{\partial B_{r_0}}=0$, and recalling the boundary
  conditions (\ref{NNN1}), it follows from equation (\ref{CNS2}) and integration by parts that
  \begin{align}
    &\frac{d}{dt}\int_{B_{R_0}\setminus B_{r_0}}g(|x|)\rho u\cdot xdx=\int_{B_{R_0}\setminus B_{r_0}}g(|x|)\partial_t(\rho u)\cdot xdx\nonumber\\
    =&-\int_{B_{R_0}\setminus B_{r_0}}g(|x|)[\text{div}(\rho u\otimes u)-\mu\Delta u-(\mu+\lambda)\nabla\text{div}\,u+\nabla p]\cdot xdx\nonumber\\
    =&\int_{B_{R_0}\setminus B_{r_0}}[\rho(u\otimes u):\nabla(g(|x|)x)-\mu\nabla u:\nabla(g(|x|)x)\nonumber\\
    &-(\mu+\lambda)\text{div}\,u\,\text{div}( g(|x)x)+p\text{div}(g(|x|)x)]dx\nonumber\\
    =&\int_{B_{R_0}\setminus B_{r_0}}[\mu\Delta(g(|x|)x)+(\mu+\lambda)\nabla\text{div}(g(|x|)x)]\cdot udx \nonumber\\
    &+\int_{B_{R_0}\setminus B_{r_0}}[\rho(u\otimes u):\nabla(g(|x|)x)+p\text{div}(g(|x|)x)]dx\nonumber\\
    =&\int_{B_{R_0}\setminus B_{r_0}}[\rho(u\otimes u):\nabla(g(|x|)x)+p\text{div}(g(|x|)x)]dx\nonumber\\
    =&\int_{B_{R_0}\setminus B_{r_0}} \left[\rho(u\otimes u):\left(g(|x|)I+\frac{g'(|x|)}{|x|}x\otimes x\right)+\frac{N}{r_0^N}p\right]dx\nonumber\\
    =&\int_{B_{R_0}\setminus B_{r_0}} \left[g(|x|)\rho|u|^2+\frac{g'(|x|)}{|x|}\rho|u\cdot x|^2+\frac{N}{r_0^N}p\right]dx\nonumber\\
    \geq&\frac{N}{r_0^N}\int_{B_{R_0}\setminus B_{r_0}}pdx. \label{m2'}
  \end{align}

Recalling (\ref{NNN1}), it follows from equation (\ref{CNS1}) and integration by parts that
\begin{align*}
  \frac{d}{dt}\int_{B_{R_0}\setminus B_{r_0}}\rho dx=\int_{B_{R_0}\setminus B_{r_0}} \partial_t\rho dx=-\int_{B_{R_0}\setminus B_{r_0}}\text{div}(\rho u)dx=0,
\end{align*}
which provides
\begin{equation}
  \label{m0}
  \int_{B_{R_0}\setminus B_{r_0}}\rho(x,t) dx=\int_{B_{R_0}\setminus B_{r_0}}\rho_0 dx=m_0.
\end{equation}

Applying Proposition \ref{prop2}, for any $x\in\mathcal O_+(t):=X(t;\mathcal O_+)$, we have $\rho(x,t)>0$, $\theta(x,t)>0$, and
\begin{align*}
  p(x,t)=&R\rho\theta=R\rho^\gamma\exp\{\log(\theta\rho^{1-\gamma})\}\\
  \geq&R\rho^\gamma\exp\left\{\inf_{y\in\mathcal O_+}\log(\theta_0(y)\rho_0^{1-\gamma}(y))\right\}\\
  =&R\rho^\gamma\exp\left\{\inf_{x\in\mathcal O_+}\left(\frac{s_0(x)}{c_v}+\log\frac AR\right)\right\}\\
  =&R\rho^\gamma\exp\left\{\frac{\underline s_0}{c_v}+\log\frac AR\right\}=Ae^{\underline s_0/c_v}\rho^\gamma(x,t).
\end{align*}
Thanks to the above estimate, and noticing that
$\mathcal O_+(t)=\{x\in\Omega|\rho(x,t)>0\},$ we deduce
\begin{align}
  \int_{B_{R_0}\setminus B_{r_0}}pdx=&\int_{(B_{R_0}\setminus B_{r_0})\cap\mathcal O_+(t)}pdx\geq Ae^{\underline s_0/c_v} \int_{(B_{R_0}\setminus B_{r_0})\cap\mathcal O_+(t)}\rho^\gamma dx\nonumber\\
  =&Ae^{\underline s_0/c_v}\int_{B_{R_0}\setminus B_{r_0}}\rho^\gamma dx.\label{NEW7}
\end{align}
By the H\"older inequality, we have
\begin{align*}
  \int_{B_{R_0}\setminus B_{r_0}}\rho dx\leq\left(\int_{B_{R_0}\setminus B_{r_0}}\rho^\gamma dx\right)^{\frac1\gamma}|B_{R_0}\setminus B_{r_0}|^{1-\frac1\gamma},
\end{align*}
and thus, recalling (\ref{m0}), we have
$$
\int_{B_{R_0}\setminus B_{r_0}}\rho^\gamma dx\geq [\omega_N(R_0^N-r_0^N)]^{1-\gamma}m_0^\gamma,
$$
where $\omega_N$ is the volume of the unit ball in $\mathbb R^N$.
Substituting the above estimate into (\ref{NEW7}) yields
\begin{align}\label{ESTP}
  \int_{B_{R_0}\setminus B_{r_0}}pdx \geq Ae^{\underline s_0/c_v}[\omega_N(R_0^N-r_0^N)]^{1-\gamma}m_0^\gamma.
\end{align}

Thanks to (\ref{ESTP}), it follows from (\ref{m2'}) that
\begin{align*}
  &\int_{B_{R_0}\setminus B_{r_0}}g(|x|)\rho(x,t) u(x,t)\cdot xdx\\
  \geq& \int_{B_{R_0}\setminus B_{r_0}}g(|x|)\rho_0 u_0\cdot xdx+
  ANe^{\underline s_0/c_v}[\omega_N(R_0^N-r_0^N)]^{1-\gamma}\frac{m_0^\gamma}{r_0^N}t\\
  =&m_2+
  ANe^{\underline s_0/c_v}[\omega_N(R_0^N-r_0^N)]^{1-\gamma}\frac{m_0^\gamma}{r_0^N}t.
\end{align*}
With the aid of the above estimate, it follows from (\ref{m1}) that
\begin{align*}
  &\int_{B_{R_0}\setminus B_{r_0}}\rho(x,t) f(|x|)dx\\
  \geq&\int_{B_{R_0}\setminus B_{r_0}}\rho_0f(|x|)dx+m_2t+ANe^{\underline s_0/c_v}[\omega_N(R_0^N-r_0^N)]^{1-\gamma}\frac{m_0^\gamma}{2r_0^N}t^2\nonumber\\
  =&m_1+m_2t+ANe^{\underline s_0/c_v}[\omega_N(R_0^N-r_0^N)]^{1-\gamma}\frac{m_0^\gamma}{2r_0^N}t^2.
\end{align*}
On the other hand, by (\ref{m0}), one has
\begin{equation*}
  \int_{B_{R_0}\setminus B_{r_0}}\rho(x,t) f(|x|)dx\leq f(R_0)\int_{B_{R_0}\setminus B_{r_0}}\rho dx=f(R_0)m_0.
\end{equation*}
Combining the above two estimates, we then obtain
$$
m_1+m_2t+ANe^{\underline s_0/c_v}[\omega_N(R_0^N-r_0^N)]^{1-\gamma}\frac{m_0^\gamma}{2r_0^N}t^2\leq f(R_0)m_0.
$$
which, recalling that $m_0>0$, implies $t\leq T_*$, for some finite time $T_*$. Therefore, $(\rho, u, \theta)$
can not exist for all time. This completes the proof of Case I.

\textbf{Case II: $\Omega=\mathbb R^N_+$.}
Define $M_0, M_1$ and $M_2$ as
$$
M_0=\int_\Omega\rho_0 dx, \quad M_1=\int_\Omega\rho_0u_{0N}x_N dx,\quad M_2= \int_\Omega\rho_0x_N^2dx.
$$
Then, by assumption, one has $M_0>0$.
By Proposition \ref{prop1} and the boundary condition (\ref{BC}), we have
\begin{equation}\label{NEW1}
\rho|_{\partial B_{R_0}^+}=p|_{\partial B_{R_0}^+}=0,\quad u|_{\partial\mathbb R^N_+}=u|_{\partial B_{R_0}^+}=0,\quad\nabla u|_{\partial B_{R_0}^+}=0.
\end{equation}

Using equation (\ref{CNS1}), it follows from integration by parts that
\begin{align}\label{NEW2}
  \frac{d}{dt}\int_{B_{R_0}^+}\rho dx=\int_{B_{R_0}^+}\partial_t\rho dx= -\int_{B_{R_0}^+}\text{div}(\rho u)dx=0,
\end{align}
and
\begin{align}\label{NEW3}
  \frac{d}{dt}\int_{B_{R_0}^+}\rho x_N^2 dx=&\int_{B_{R_0}^+}\partial_t\rho x_N^2dx= -\int_{B_{R_0}^+}\text{div}(\rho u)x_N^2dx\nonumber\\
  =&2\int_{B_{R_0}^+}\rho u_Nx_Ndx.
\end{align}
Recalling the boundary conditions (\ref{NEW1}), and using equation (\ref{CNS2}), it follows from integration by parts that
\begin{align}
  \frac{d}{dt}\int_{B_{R_0}^+}&\rho u_Nx_N dx=\int_{B_{R_0}^+}\partial_t(\rho u_N)x_Ndx\nonumber\\
  =&-\int_{B_{R_0}^+}[\text{div}(\rho u_Nu)-\mu\Delta u_N-(\mu+\lambda)\partial_N\text{div}\,u+\partial_Np]x_N dx\nonumber \\
  =&\int_{B_{R_0}^+}(\rho|u_N|^2-\mu\partial_Nu_N-(\mu+\lambda)\text{div}\,u +p)dx\nonumber\\
  =&\int_{B_{R_0}^+}(\rho|u_N|^2+p)dx\geq\int_{B_{R_0}^+}pdx. \label{NEW4}
\end{align}

By (\ref{NEW2}) and the assumption, we have
\begin{equation}\label{NEW5}
\int_{B_{R_0}^+}\rho(x,t)dx=\int_{B_{R_0}^+}\rho_0 dx=M_0.
\end{equation}
Following the same argument as that for (\ref{ESTP}),
one can obtain
\begin{align}\label{NEW6}
  \int_{B_{R_0}^+}p(x,t)dx\geq Ae^{\underline s_0/c_v}\left(\frac{\omega_N}{2}R_0^N\right)^{1-\gamma} M_0^\gamma,
\end{align}
and thus, it follows from (\ref{NEW4}) that
\begin{align*}
  \int_{B_{R_0}^+}\rho(x,t) u_N(x,t)x_Ndx\geq& \int_{B_{R_0}^+}\rho_0 u_{0N}x_Ndx+Ae^{\underline s_0/c_v}\left(\frac{\omega_N}{2}R_0^N\right)^{1-\gamma} M_0^\gamma t\\
  =&M_1+Ae^{\underline s_0/c_v}\left(\frac{\omega_N}{2}R_0^N\right)^{1-\gamma} M_0^\gamma t,
\end{align*}
which, substituted into (\ref{NEW3}), yields
\begin{align*}
  \int_{B_{R_0}^+}\rho(x,t)x_N^2dx\geq& 2\int_{B_{R_0}^+}\rho_0x_N^2dx +2M_1t+Ae^{\underline s_0/c_v}\left(\frac{\omega_N}{2}R_0^N\right)^{1-\gamma} M_0^\gamma t^2\\
  =&2M_2+2M_1t+Ae^{\underline s_0/c_v}\left(\frac{\omega_N}{2}R_0^N\right)^{1-\gamma} M_0^\gamma t^2.
\end{align*}
On the other hand, recalling (\ref{NEW6}), we have
$$
\int_{B_{R_0}^+}\rho(x,t)x_N^2dx\leq R_0^2M_0.
$$
Combining the above two estimates, and recalling that $M_0>0$, one obtains $t\leq T_{**}$, for some positive
time $T_{**}$. This completes the proof of Case II.
\end{proof}

\begin{proof}[\emph{\textbf{Proof of Theorem \ref{thm2}}}]
Note that for $N=1$, the cases $\Omega=\mathbb R^N\setminus \overline B_{r_0}$ and $\Omega=\mathbb R^N_+$ are essentially
the same, because the domain $\mathbb R\setminus[-r_0, r_0]$ breaks into
two half lines, and system (\ref{CNS1})--(\ref{CNS3}) on these two half lines does not effect each other. Therefore, we only need to consider the case $\Omega=\mathbb R_+$. Denote
$$
K_0=\int_0^\infty\rho_0dx,\quad K_1=\int_0^\infty\rho_0u_0xdx,\quad K_3=\int_0^\infty\rho x^2 dx.
$$
Then, by assumption, we have $K_0>0$.
By Proposition \ref{prop1} and the boundary condition (\ref{BC}), we have
$$
\rho|_{x=R_0}=p|_{x=R_0}=u|_{x=0}=u|_{x=R_0}=\partial_x u|_{x=R_0}=0.
$$

Following the arguments in Case II of the proof of Theorem \ref{thm1}, we have
\begin{equation}\label{ADD1}
\int_0^{R_0}\rho(x,t)dx=\int_0^{R_0}\rho_0dx=K_0,
\end{equation}
and
\begin{align}
\frac{d}{dt}\int_0^{R_0}\rho x^2dx=&2\int_0^{R_0}\rho uxdx,\label{ADD2}\\
  \frac{d}{dt}\int_0^{R_0}\rho uxdx=&\int_0^{R_0}(\rho u ^2+p)dx. \label{ADD3}
\end{align}
Integrating equation (\ref{CNS3}) over $(0,R_0)\times(0,t)$ yields
\begin{equation}
  \int_0^{R_0}\rho(x,t) E(x,t)dx=\int_0^{R_0}\rho_0 E_0dx=\int_0^\infty \rho_0\left(\frac{|u_0|^2}{2}+c_v\theta_0\right)dx=:\mathcal E_0. \label{ADD4}
\end{equation}
Note that, by the assumption $\int_0^\infty\rho_0dx>0$ and using the compatibility condition (\ref{COMP}), one has $\mathcal E_0>0$.

If $R\geq2c_v$, i.e.\,$\gamma\geq3$, then
\begin{align*}
  \rho u ^2+p=&\rho u^2+R\rho\theta=2\rho\left(\frac{u^2}{2}+c_v\theta\right)+(R-2c_v)\rho\theta\\
  =&2\rho E+(R-2c_v)\rho\theta\geq2\rho E,
\end{align*}
and if $R<2c_v$, i.e.\,$\gamma\in(1,3)$, then
\begin{align*}
  \rho u^2+p=&\frac{R}{c_v}\rho\left(\frac{u^2}{2}+c_v\theta\right) +\left(1-\frac{R}{2c_v}\right) \rho u^2\\
  =&(\gamma-1)\rho E+\left(1-\frac{R}{2c_v}\right) \rho u^2\geq(\gamma-1)\rho E.
\end{align*}
Therefore, for any $\gamma\in(1,\infty)$, we have
$$
\int_0^{R_0}(\rho u^2+p)dx\geq\min\{2,\gamma-1\}\int_0^{R_0}\rho Edx.
$$

Thanks to the above, and recalling (\ref{ADD4}), we then obtain
$$
\int_0^{R_0}(\rho u^2+p)dx\geq\mathcal \min\{2,\gamma-1\}\mathcal E_0.
$$
Substituting this into (\ref{ADD3}), and integrating in $t$ yields
\begin{align*}
  \int_0^{R_0}\rho(x,t)u(x,t)xdx\geq&\int_0^{R_0}\rho_0u_0xdx+\min\{2,\gamma-1\} \mathcal E_0t\\
  =&K_1+\min\{2,\gamma-1\} \mathcal E_0t.
\end{align*}
With the aid of the above estimate, it following from (\ref{ADD2}) that
\begin{align*}
  \int_0^{R_0}\rho x^2dx\geq&\int_0^{R_0}\rho_0x^2dx+2K_1t+\min\{2,\gamma-1\} \mathcal E_0t^2\\
  =&K_2+2K_1t+\min\{2,\gamma-1\} \mathcal E_0t^2.
\end{align*}
On the other hand, recalling (\ref{ADD1}), we have
$$
\int_0^{R_0}\rho(x,t)x^2dx\leq R_0^2\int_0^{R_0}\rho(x,t)dx=K_0R_0^2.
$$
Combing the above two estimates, and recalling that $\mathcal E_0>0$, one obtains $t\leq T_{***}$, for some positive time $T_{***}$. This proves the conclusion.
\end{proof}

\section*{Acknowledgments}
{D.Bian was partially supported by NSFC under the contracts 11501028 and 11871005. J.Li was partly supported by start-up fund 550-8S0315 of the South China Normal University, NSFC 11771156, and the Hong Kong RGC Grant
CUHK-14302917.}
\par


\begin{thebibliography}{50}
\bibitem{ChoJin}
Y. Cho and B. J. Jin,
\emph{Blow-up of viscous heat-conducting compressible flows},
J. Math. Anal. Appl., \bf320 \rm(2006), 819--826.

\bibitem {CHOCLASS}
Y. Cho and H. Kim,
\emph{On calssical solutions of the compressible Navier-Stokes
equations with nonnegative initial densities},
Manuscript Math., \bf120 \rm(2006), 91--129.

\bibitem{CK}
Y. Cho and H. Kim,
\emph{Existence results for viscous polyrropic fluids with vacuum}
J. Differential Equations, \bf228 \rm(2006), 377--411.

\bibitem{DuLiZhang}
D. Du, J. Li and K. Zhang,
\emph{Blow-up of smooth solutions to the Navier-Stokes equations for compressible isothermal fluids},
Commun. Math. Sci., \bf11 \rm(2013), 541--546.


\bibitem{Feireisl4}
E. Feireisl,
\emph{On the motion of a viscous compressible and heat conducting fluid},
Indiana Univ. Math. J., \bf53(6) \rm(2004), 1705--1738.

\bibitem{Feireisl5}
E. Feireisl,
\emph{Dynamics of viscous compressible fluids},
Oxford Lecture Series in Mathematics and its Applications,
26. Oxford University Press, Oxford, 2004.

\bibitem{Feireisl1}
E. Feireisl, A. Novotny and H. Petzeltova,
\emph{On the existence of globally defined weak solutions to
the Navier-Stokes equations},
J. Math. Fluid Mech., \bf3 \rm(2001), 358--392.

\bibitem{HLX1}
X. Huang, J. Li and Z. Xin,
\emph{Global well-posedness of classical solutions with large
oscillations and vacuum to the three-dimensional isentropic
compressible Navier-Stokes equations},
Comm. Pure. Appl. Math., \bf64 \rm(2012), 549--585.

\bibitem{ITAYA}
N. Itaya,
\emph{On the Cauchy problem for the system of fundamental
equations describing the movement of compressible viscous fluid},
Kodai Math. Sem. Rep., \bf23 \rm(1971), 60--120.

\bibitem{LIXIN17}
{Li, J.; Xin, Z.:
\emph{Entropy-bounded solutions to the compressible Navier-Stokes equations: with far field
vacuum}, arXiv:1710.06571v1 [math.AP]}

\bibitem{Jiang4}
S. Jiang and P. Zhang,
\emph{ Axisymmetric solutions of the 3D Navier-Stokes equations for compressible isentropic fluids},
J. Math. Pure Appl., \bf82 \rm(2003), 949--973.

\bibitem{Lions2}
P. L. Lions,
\emph{Mathematical topics in fluid mechanics. Vol. 2. Compressible models},
Oxford Lecture Series in Mathematics and its Applications, 10. Oxford Science Publications. The Clarendon Press, Oxford University Press, New York, 1998.


\bibitem{LUO}
Z. Luo,
\emph{On the motion of viscous compressible flows},
Ph.D thesis, The Chinese University of HongKong, 2010.

\bibitem {MATSU1}
A. Matsumura and T. Nishida,
\emph{The initial value problem for the equations of
motion of viscous and heat-conductive gases},
J. Math. Kyoto Univ., \bf20 \rm(1980), 67--104.

\bibitem {MATSU2}
A. Matsumura and T. Nishida,
\emph{The initial boundary value problems for the equations
of motion of compressible and heat-conductive fluids},
Comm. Math. Phys., \bf89 \rm(1983), 445--464.


\bibitem{Rozanova}
O. Rozanova,
\emph{Blow-up of smooth highly decreasing at infinity solutions
to the compressible Navier-Stokes equations},
J. Differential Equations,
\bf245 \rm(2008), 1762--1774.

\bibitem{TanWang}
Z. Tan and Y. Wang,
\emph{Blow-up of smooth solutions to the Navier-Stokes equations of compressible viscous heat-conducting fluids},
J. Aust. Math. Soc., \bf88 \rm(2010), 239--246.

\bibitem{TANI}
A. Tani,
\emph{On the first initial-boundary value problem of
compressible viscous fluid motion},
Publ. Res. Inst. Math. Sci. Kyoto Univ., \bf13 \rm(1977), 193--253.


\bibitem{XinBlowup}
Z. Xin,
\emph{Blow up of smooth solutions to the compressible
Navier-Stokes equation with compact density},
Comm. Pure Appl. Math., \bf51 \rm(1998), 229--240.

\bibitem{XINYAN}
Z. Xin and W. Yan,
\emph{On blowup of classical solutions to the compressible Navier-Stokes equations}, Commum. Math. Phys., \bf321 \rm(2013), 529--541.





\end{thebibliography}
\end{document}